\def\Id{{\rm Id}}
\def\eps{\varepsilon}
\def \R{{\mathbb R}}
\def \Z{{\mathbb Z}}
\def \N{{\mathbb N}}
\def\de{ \delta  }
\newcommand{\T}{{\mathbb T}}
\newcommand{\prf}{{\begin{proof}}}
\newcommand{\epf}{{\end{proof}}}
\newcommand{\BB}{{\mathbb B}}
\DeclareMathOperator{\diff}{Diff}
\newtheorem{theo}{\sc Theorem}
\newtheorem{prop}{\sc Proposition}
\newtheorem{cor}{\sc corollary}
\theoremstyle{definition}
\def\bee{\begin{equation}}
\def\eee{\end{equation}}
\theoremstyle{rema}
\newcommand{\tx}{\tilde x}
\newcommand{\ty}{\tilde y}
\newcommand{\tz}{\tilde z}
\newcommand{\tB}{\tilde B}
\newcommand{\tF}{\tilde {\mathbb F}}
\newcommand{\tR}{\tilde R}
\newcommand{\tS}{\tilde S}
\newcommand{\tT}{\tilde T}
\newcommand{\tD}{\tilde \Delta}
\newcommand{\FF}{\mathbb F}
\newcommand{\TT}{{\mathbb T}}
\newcommand{\NN}{{\mathbb N}}
\newtheorem{definition}{Definition}
\numberwithin{equation}{section}
\renewcommand{\diff}{{\rm Diff}^{d+\eps}_\mu(\BB^d)}
\definecolor{dgreen}{rgb}{0.1,0.6,0.1}
\definecolor{bluegreen}{rgb}{0.1,0.5,0.2}
\definecolor{bpurple}{rgb}{0.74,0.2,0.64}
\definecolor{orange}{rgb}{0.8, 0.33, 0.0}
\definecolor{orange}{rgb}{1,0.5,0}
\title[Realizing $d$-dim. dynamics by renormalization of $C^d$-perturb. of identity]{Realizing arbitrary $d$-dimensional dynamics by renormalization of $C^d$-perturbations of identity}
\author{Bassam Fayad,  \quad Maria Saprykina}
\begin{document}
\maketitle

\counterwithout{equation}{section}

\begin{abstract} Any $C^d$ conservative map $f$ of the $d$-dimensional unit ball $\BB^d$ can be realized by renormalized iteration of a $C^d$ perturbation of  identity: there exists a conservative diffeomorphism of $\BB^d$, arbitrarily close to identity in the $C^d$ topology, that has a periodic disc on which the return dynamics after a $C^d$ change of coordinates is exactly $f$. 
\end{abstract} 

\section*{}

What kind of dynamics can be {\it realized by renormalized iteration} of some  diffeomorphism $F$ of the unit ball that is close to identity? 
Given a $d$-dimensional $C^r$-diffeomorphism $F$, its {\it renormalized iteration} is an iteration of $F$, restricted
to a certain $d$-dimensional ball and taken in some $C^r$-coordinates in which the ball acquires radius 1.

This natural question can be traced back to a celebrated paper by Ruelle and Takens
\cite{RT}, where it appeared in connection to the mathematical notion of turbulence. From a subsequent paper by Newhouse, Ruelle and Takens \cite{NRT} it can be seen  that any  dynamics of class $C^d$ on the $d$-dimensional torus $\T^d$ can be realized   by  renormalized iteration  of a 
$C^d$-small perturbation of the identity map
on $\T^d$.\footnote{ In \cite{T} the straightforward strategy of \cite{RT} that leads to the latter result is clearly explained. We will reproduce this sketch below".}
This result is specific to tori.  As D.Turaev points out in \cite{T}, it implies that on an arbitrary manifold $M$ of dimension $d\geq 2$, 
arbitrary $d$-dimensional dynamics can be implemented by iterations of 
$C^{d-1}$-close to identity maps of $\BB^d$, but
the construction gives no clue of whether the same can be said about the $C^{d}$-close to identity maps.

The present note shows that the construction of \cite{NRT} can be enhanced  by an application of a method   
in the spirit of Moser \cite{M} (or Anosov-Katok \cite{AK}),  to get realization by  renormalized iteration of $C^{d+\eps}$-close to identity maps of $\BB^d$.

More precisely,
let  $\BB^d= \{ x\in \R^d : \| x \| \leq 1\}$  denote the unit ball and $\mu$ stand for the Lebesgue measure on it;  
for $r \in \N \cup \{ \infty \}$ we denote by ${\rm
  Diff}^r_\mu(\BB^d)$ the set of diffeomorphisms of class $C^r$ of
$\BB^d$, 
preserving the boundary.

\begin{theo} \label{theo.main} For any natural $d\geq 2$ there exists
$\eps_0>0$ (one can take $\eps_0 = \frac{1}{(d+1)^4}$)
such that the  following holds.
  
 For any $0<\eps <\eps_0$, any $\de >0$, any $f\in {\rm Diff}^{d+\eps}_\mu(\BB^d)$
 there exists $F \in {\rm Diff}^{d+\eps}_\mu(\BB^d)$ and a periodic
 sequence of balls $B,F(B),\ldots,F^M(B)=B\subset \BB^d$ 
such that 
\begin{itemize}
\item $\|F-{\rm Id}\|_{d+\eps}  \leq \de $; 
\item $h \circ F^M \circ h^{-1}=f$, where $h$ is the similarity that sends $B$ to $\BB^d$. 
\end{itemize}
\end{theo}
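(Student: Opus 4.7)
The plan is to combine the classical Newhouse--Ruelle--Takens renormalization construction with a fine isotopy decomposition of $f$ and a Moser-type control of norms. Since $\diff$ is arcwise connected to the identity, I would fix a smooth isotopy $\{f_t\}_{t\in[0,1]}$ from $\id$ to $f$ and set $g_k:=f_{k/N}\circ f_{(k-1)/N}^{-1}$, so that $f=g_N\circ\cdots\circ g_1$ with each $g_k$ conservative and $\|g_k-\id\|_{C^{d+\eps}}\le C_f/N$ for some constant $C_f$ depending on $f$ and on the chosen isotopy. The integer $N$ will be taken large at the end.

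Next I would build a conservative backbone $F_0\in\diff$ with $F_0^M=\id$ and a single periodic orbit of period $M\ge N$ visiting $N$ pairwise disjoint balls $C_1,\dots,C_N$ of common radius $R$. The basic model is a product of commuting small rotations by $2\pi p_j/M$ in disjoint coordinate $2$-planes of $\BB^d$, with the $p_j$ chosen so that a single orbit spreads over the associated torus of revolution as uniformly as possible; because rotations are linear, $\|F_0-\id\|_{C^{d+\eps}}\lesssim 1/M$. For each $k$ I would fix a similarity $h_k:C_k\to\BB^d$ whose linear part cancels the rotation of $F_0^{n_k}$ along the orbit, so that the rescaled transitions $h_{k+1}\circ F_0^{n_{k+1}-n_k}\circ h_k^{-1}$ are exactly the identity on $\BB^d$. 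The local kicks $\phi_k:=h_k^{-1}\circ g_k\circ h_k$ on $C_k$, extended by $\id$ outside, then have disjoint supports, so $\Phi:=\phi_N\circ\cdots\circ\phi_1$ is a well-defined conservative $C^{d+\eps}$ diffeomorphism.

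Setting $F:=F_0\circ\Phi$ and $B:=C_1$ and tracking the orbit of $B$ under $F$ through $M$ iterations, exactly one kick $\phi_k$ is picked up per visit to $C_k$, and the cancellations above yield $h_1\circ F^M\circ h_1^{-1}=g_N\circ\cdots\circ g_1=f$, which is the required renormalization identity (with $h=h_1$). The chain rule applied to a similarity of ratio $R$ gives the key scaling $\|\phi_k-\id\|_{C^{d+\eps}}\lesssim R^{1-d-\eps}\|g_k-\id\|_{C^{d+\eps}}\lesssim C_f R^{1-d-\eps}/N$, which combined with $\|F_0-\id\|_{C^{d+\eps}}\lesssim 1/M$ leaves a tight inequality to solve for $M$, $N$ and $R$.

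The main obstacle is precisely this parameter balance. On $\T^d$ the NRT translation backbone produces $M$ orbit points uniformly spread at spacing $\sim M^{-1/d}$, which allows $R\sim M^{-1/d}$, $N\sim M$, and a kick bound $\sim C_f M^{-(1-\eps)/d}$, closing the argument for any $\eps>0$. On $\BB^d$, however, a bare rotation backbone only spreads the orbit along a lower-dimensional torus of revolution, and the naive estimate yields only Turaev's $C^{d-1}$-smallness. The input needed to improve this to $C^{d+\eps}$, in the spirit of Moser \cite{M} or of the Anosov--Katok scheme \cite{AK}, is an iterative conjugation procedure that builds $F_0$ (or, equivalently, refines the kicks $\phi_k$) as a finite sequence of successive conjugates of small rotations by area-preserving diffeomorphisms, chosen so as to distribute the periodic orbit across the full $d$ dimensions of $\BB^d$ while controlling $C^{d+\eps}$ norms by interpolation between a norm in which the construction is genuinely small and a higher norm in which it remains bounded. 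The explicit threshold $\eps_0=1/(d+1)^4$ quantifies the margin of exponents left by this iterative patching in dimension $d$, and this is where I expect the principal technical difficulty to lie.
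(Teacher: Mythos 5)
Your high-level architecture matches the paper's: an isotopy fragmentation $f=g_N\circ\cdots\circ g_1$ with $\|g_k-\id\|_{C^{d+\eps}}\lesssim 1/N$, a conservative backbone that periodically permutes a chain of disjoint balls and returns by the identity, rescaled kicks with disjoint supports, and the scaling $\|\phi_k-\id\|_{C^{d+\eps}}\lesssim R^{1-(d+\eps)}/N$ closing a parameter inequality. You also correctly identify the obstacle: a bare rotation/translation backbone on $\BB^d$ only yields a tower of length $M\sim\rho^{-(d-1)}$ for balls of radius $\rho$, and this is exactly one dimension short of what the $C^{d+\eps}$ bound requires.

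But the proposal stops precisely at the paper's central new construction. You write that the fix should be ``an iterative conjugation procedure... a finite sequence of successive conjugates of small rotations... controlling $C^{d+\eps}$ norms by interpolation,'' and you admit that this is where the principal difficulty lies. That is the gap. The paper's mechanism is neither iterative nor interpolation-based: it is a \emph{single, explicit} volume-preserving conjugation $h_q$ that re-cuts a fat torus $\FF=[0,1]\times\T^{d-1}\hookrightarrow\BB^d$ into $q$ thin slabs of width $1/q$ in the angular direction and stacks them into a longer, thinner fat torus $\tF_q=[0,1/q]\times(\R/q\Z)\times\T^{d-2}$. On $\tF_q$ a mollified translation $\tT_\varphi$ along the long direction has period $qA^{d-1}$ --- $q$ times longer than the NRT tower at the same ball radius $\rho=1/(4A)$ --- and acts isometrically on a fixed proportion of the tower. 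The price of the re-cutting is the norm growth $\|h_q\|_r\lesssim q^{2r}$, which after conjugation gives $\|T-\id\|_r\lesssim q^{r^4}/A$ by a direct chain-rule (Fa\`a di Bruno) estimate, with no Landau--Kolmogorov interpolation anywhere. The kick term becomes $\|\bar F-\id\|_r\lesssim A^{r-1}/(qA^{d-1})=A^{\eps}/q$, and both terms are simultaneously small iff $A^{\eps}\ll q\ll A^{1/r^4}$, which is possible exactly when $\eps<1/r^4=1/(d+\eps)^4$, giving the threshold $\eps_0=1/(d+1)^4$. An important bookkeeping device you would also need is that $T$ returns isometrically only on a \emph{positive proportion} of the tower levels (the paper's ``funny periodic tower''), not at every step, and that this is enough because the kicks are only inserted at isometry times. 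Without the re-cutting conjugation and its polynomial $C^r$ estimate, the argument does not close; the rest of your proposal would then reproduce Turaev's $C^{d-1}$ bound rather than $C^{d+\eps}$.

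Two smaller inaccuracies: on $\BB^d$ a ``product of rotations in disjoint coordinate $2$-planes'' does not in general exist for odd $d$, and in any case still confines the orbit to a sub-torus, so it is not closer to the goal than the NRT translation; and the norm balance you sketch for the $\T^d$ case ($R\sim M^{-1/d}$, closing for all $\eps>0$) relies on the torus having a \emph{global} translation filling all $d$ directions uniformly, which is precisely what is unavailable on the ball and what the re-cutting conjugation emulates.
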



It should be mentioned that this theorem does not hold for $d=1$, see e.g., \cite{T}, Sec.1.

We note that in \cite{NRT}, the restriction on smoothness is removed, to the price of realizing $d$-dimensional dynamics by renormalizing $(d+1)$-dimensional maps on some $d$-dimensional  embedded manifold.

Following up on \cite{RT,NRT} and his own work on universality,  Turaev in \cite{T} asks whether an arbitrary $d$-dimensional 
 dynamics can be realized by iterations of a $C^{r}$-close to identity map of $B^d$  and a large $r$? 
  Theorem \ref{theo.main} does not say anything  for $r>d+\eps$. 
  While a more careful application of the same tools of its proof may yield the same statement in $C^{d+1-\eps}$ regularity,  dealing with higher regularities will certainly require new ideas.

\subsection*{Related results}  Note that in this note we are concerned with  {\it exactly realizing }  
and not just {\it approximating} any  given map.  Approximating any given dynamics by renormalization (on almost periodic discs) is a very interesting topic with a vast literature. For instance, D. Turaev in \cite{T}, shows that for any $r \geq 1$ the renormalized iterations of 
$C^r$-close to identity maps of an $n$-dimensional unit ball $\BB^n$ ($n \geq 2$) form a residual set among all orientation-preserving $C^r$-diffeomorphisms $\BB^n \to R^n$. 
As an application he shows that any generic $n$-dimensional dynamical phenomenon can be 
arbitrarily closely (in $C^r$) approximated  by iterations of $C^r$-close to identity maps, with the same dimension of the phase space. 
We refer to Turaev's paper for an account and for references. 
We just mention that recently, a highlight of the universality approach  was the construction by Berger and Turaev in \cite{BT} 
of smooth conservative disc diffeomorphisms that are arbitrarily close to identity in any regularity and that have positive metric entropy, thus solving a conjecture made by Herman in \cite{herman-ICM}.

\subsection*{An application  to universality in the neighborhood of an elliptic fixed point of a area preserving surface diffeomorphisms.}  A. Katok (personal communication) observed, that any area preserving
surface diffeomorphism that has an elliptic periodic point can be
perturbed in $C^r$ topology (arbitrary $r$) so that the perturbed
diffeomorphism is area preserving and has a periodic disc on which the
return dynamic is identity. Hence, we obtain the following consequence
of Theorem \ref{theo.main}.
 \begin{cor}  \label{cor.main} Any $C^{2+\eps}$ area preserving
diffeomorphism $g$ of a surface that has an elliptic periodic point
can be perturbed in the $C^{2+\eps}$ topology into
an area preserving diffeomorphism $\tilde{g}$ that has a periodic disc
on which the renormalized dynamics is equal to any prescribed $F \in
{\rm Diff}^{2+\eps}_\mu(\BB^2)$.  
\end{cor}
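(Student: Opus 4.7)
The plan is to combine Katok's observation with Theorem \ref{theo.main} via a local insertion. I would first invoke Katok's result: given any $\eta>0$, perturb $g$ in $C^{2+\eps}$ to an area-preserving $g_1$ that has a periodic topological disc $D$ on the surface, of some period $N$, on which the return map $g_1^N|_D$ is the identity. Let $h:D\to\BB^2$ denote the similarity sending $D$ to the unit disc.

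Next, I would apply Theorem \ref{theo.main} in dimension $d=2$ to the prescribed $F\in{\rm Diff}^{2+\eps}_\mu(\BB^2)$: for any $\delta>0$, this provides $G\in{\rm Diff}^{2+\eps}_\mu(\BB^2)$ with $\|G-\Id\|_{2+\eps}\le\delta$ together with a periodic chain of balls $B_0,G(B_0),\ldots,G^M(B_0)=B_0\subset\BB^2$ whose renormalized $M$-th return is $F$. I would then define the global diffeomorphism $\tilde g:=g_1\circ\Phi$, where $\Phi$ equals $h^{-1}\circ G\circ h$ on $D$ and the identity elsewhere. Since $g_1^N|_D=\Id$, this yields $\tilde g^N|_D=h^{-1}\circ G\circ h$, so the $MN$-th return dynamics of $\tilde g$ on the sub-disc $h^{-1}(B_0)$, renormalized to $\BB^2$, is exactly $F$. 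A triangle inequality gives $\|\tilde g-g\|_{2+\eps}\le\eta+C_h\,\delta$, which can be made arbitrarily small by choosing $\eta,\delta$ small enough, since $g_1$ and the similarity $h$ are fixed.

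The main technical obstacle is that the extension $\Phi$ of $G$ by the identity off $D$ is not automatically a $C^{2+\eps}$ diffeomorphism of the surface: Theorem \ref{theo.main} guarantees only that $G$ preserves $\partial\BB^2$, not that it coincides with the identity on a boundary collar. I would remedy this either by applying Theorem \ref{theo.main} on a concentric disc $\BB^2_{1-\rho}\subset\BB^2$ of slightly smaller radius, and then using a $C^{2+\eps}$-small area-preserving isotopy supported in the annular collar $\BB^2\setminus\BB^2_{1-\rho}$ to taper $G$ to the identity while avoiding the realizing chain $B_0,\ldots,G^{M-1}(B_0)$, or by observing that the Moser/Anosov--Katok-type perturbations used in the proof of Theorem \ref{theo.main} can be arranged to have support compactly contained in the interior of $\BB^2$. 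Either route produces a $G$ that is the identity near $\partial\BB^2$; the corresponding $\Phi$ is then a genuine area-preserving $C^{2+\eps}$ diffeomorphism of the surface, the tapering affects neither the measure nor the renormalized dynamics on $B_0$, and the construction goes through.
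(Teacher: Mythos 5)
Your proof is correct and follows the route the paper itself intends (the paper leaves it as a one-line consequence). The boundary issue you flag is already handled by the construction in Proposition \ref{prop.tower}: the map $\tT_\varphi$ is the identity near $\partial\tF_q$, so $T$ (and hence $F=T\circ\bar F$) is the identity on a collar of $\partial\BB^d$, which lets you insert it directly into the periodic disc $D$ without any tapering isotopy.
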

In other words, arbitrarily close (in the sense of $C^{2+\eps}$) to any area preserving
diffeomorphism with an elliptic periodic point one can find any prescribed
dynamics.

We do not know whether every area preserving surface diffeomorphism
that is not uniformly hyperbolic can be perturbed in the $C^2$
topology into one that has elliptic periodic points (this is known to
hold in $C^1$ topology). Would this be proved, Corollary
\ref{cor.main} would imply that any area preserving surface
diffeomorphism that is not uniformly hyperbolic can be perturbed in
$C^2$ topology into one that contains any prescribed dynamics.

\bigskip

\section*{Proofs}

Our construction follows in part the straightforward approach of \cite{RT,NRT} of fragmenting the target dynamics into a composition of a large number of close to identity maps that are then reproduced (up to rescaling) on a sequence of pairwise disjoint small balls.  Thus, we start by presenting the constructions from \cite{RT,NRT}, and then introduce and explain the changes we had to make to carry out the construction in slightly higher regularity.

\subsection*{1. Fragmentation}
The first ingredient of the proof is the following proposition of \cite{RT}; see \cite{NRT} or Turaev \cite{T}, page 2, for a comprehensive illustration. 

\begin{prop} [Fragmentation] \label{prop.fragmentation} Given $r\in \N$, $f \in {\rm Diff}^r_\mu(\BB)$,  for any $M \in \N$ there exists $f_0,\ldots,f_{M-1} \in {\rm Diff}^r_\mu(\BB)$ and a constant $C(r,f) >0$ such that 
\begin{itemize}
\item $\|f_i-{\rm Id}\|_r \leq C(r,f) M^{-1}$,
\item $f=f_0 \circ \ldots \circ f_{M-1}$.
\end{itemize}
\end{prop}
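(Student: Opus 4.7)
The plan is to realize $f$ as the time-$1$ map of a smooth time-dependent divergence-free vector field on $\BB^d$ and then take $f_i$ to be the flow over the interval $[i/M,(i+1)/M]$. Concretely, I would first build a smooth $\mu$-preserving isotopy $\{f_t\}_{t\in[0,1]}\subset {\rm Diff}^r_\mu(\BB^d)$ with $f_0=\Id$ and $f_1=f$. Since $\BB^d$ is contractible, a suitably smoothed straight-line homotopy (in a chart) provides a $C^r$-isotopy $\tilde f_t$ in ${\rm Diff}^r(\BB^d)$ from $\Id$ to $f$, not necessarily preserving volume. The densities $\rho_t = d(\tilde f_t)_*\mu/d\mu$ are $C^r$-smooth in $t$ and equal $1$ at $t=0,1$, so a parametric version of Moser's theorem (solve $\mathrm{div}\,V_t=-\partial_t\rho_t$ with $V_t$ tangent to $\partial\BB^d$ and integrate) produces a smooth family $\psi_t$ with $\psi_0=\psi_1=\Id$ and $(\psi_t)_*(\tilde f_t)_*\mu=\mu$; then $f_t:=\psi_t\circ\tilde f_t$ is the desired volume-preserving isotopy.

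Writing $\partial_t f_t=X_t\circ f_t$, the vector field $X_t$ is divergence-free, tangent to $\partial\BB^d$, and uniformly bounded in $C^r$ by a constant depending only on $r$ and $f$. Setting
\[
f_i := f_{(i+1)/M}\circ f_{i/M}^{-1},\qquad i=0,\dots,M-1,
\]
the composition $f_{M-1}\circ\cdots\circ f_0$ telescopes to $f_1\circ f_0^{-1}=f$, and each $f_i$ is the flow of $X_t$ on a time interval of length $1/M$ started at the identity. Integrating the variational equations and applying a Gronwall inequality in each $C^k$-seminorm up to order $r$ then yields $\|f_i-\Id\|_r\leq C(r,f)/M$.

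The main obstacle is the first step: producing the isotopy with a $C^r$-bounded generator. Once the parametric Moser correction is carried out with good $C^r$-control in both $t$ and space (which is standard on the ball once the boundary is taken into account), the remaining fragmentation estimate is routine ODE bookkeeping, and one can also arrange $f_i$ to preserve the boundary by choosing $V_t$ tangent to $\partial\BB^d$ throughout.
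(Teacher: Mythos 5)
Your proof follows exactly the paper's approach: take a $C^r$ volume-preserving isotopy $\{f_t\}$ from $\Id$ to $f$, set $f_i := f_{(i+1)/M}\circ f_{i/M}^{-1}$, and note that the composition telescopes while each $f_i$ is $O(1/M)$-close to the identity in $C^r$. The paper's proof is a two-line sketch that simply posits the Lipschitz isotopy and says ``the estimate is straightforward''; you add useful detail by explaining how to produce the $\mu$-preserving isotopy with a $C^r$-bounded generator (smoothed linear homotopy corrected by a parametric Moser argument, keeping the generating field tangent to $\partial\BB^d$) and by naming the Gronwall bookkeeping that yields $\|f_i-\Id\|_r\leq C(r,f)/M$. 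One small mismatch: with your definition the telescoping gives $f=f_{M-1}\circ\cdots\circ f_0$, whereas the proposition states $f=f_0\circ\cdots\circ f_{M-1}$; this is just a reindexing (replace $f_i$ by $f_{M-1-i}$, or equivalently reverse the isotopy) and does not affect the substance. (Incidentally, the paper's own displayed formula $f_i=\psi_{i/M}\circ\psi_{(i-1)/M}^{-1}$ for $i=0,\dots,M-1$ has the same off-by-one slip at $i=0$.)
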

\begin{proof}
Consider a Lipschitz isotopy $\psi: [0,1] \to  {\rm Diff}^r_\mu(\BB)$ (Lipschitz in $t \in [0,1]$) such that 
$\psi_0={\rm Id}$ and $\psi_1=f$. Let $f_i=\psi_{i/M}\circ \psi_{(i-1)/M}^{-1}$, for $i=0,\ldots,M-1$. The estimate is straightforward. 
\end{proof}

\subsection*{2. Idea of the proof by \cite{RT,NRT}.}
On a torus $\TT^{d-1}$ consider an $A^{d-2}$-periodic translation 
$$
S^t(X)=X+t(1,\frac{1}{A}, \dots ,\frac{1}{A^{d-2}}).
$$ 
Let $\gamma$ be the  closed invariant curve of $S^{t}$ passing through the origin. 
The turbular neighbourhood of $\gamma$ of radius $\frac{1}{3A}$ does not intersect itself. Let $B$ be the $(d-1)$-dimensional ball of radius 
$\rho := \frac{1}{4A}$ centred at the origin, and let $B_i=S^{\frac{i}{A}}(B)$ for $i=0,\dots, A^{d-1}-1$. 
Then 
$S^t$ is an isometry,
$S^{A^{d-1}}(B)=B$, and
all $B_i$ for $i=0,\dots, A^{d-1}-1$ are disjoint. In other words, $B$ is the base of a periodic tower of discs for the map $S^{\frac{1}{A}}$. The height of the tower is $M=A^{d-1}$.

To prove the theorem of  \cite{NRT}, given $\eps>0$ and  a map $f\in {\rm Diff}^{d-\eps}_\mu(\TT^{d-1})$,  one applies Proposition \ref{prop.fragmentation} with 
$M=A^{d-1}$ to obtain $f_0\dots ,f_{M-1}$ with 
$\|f_i-{\rm Id}\|_{d} \leq C(d,f) M^{-1}$ such that $f=f_0 \circ \ldots \circ f_{M-1}$. Let $h_i$ be a similarity sending 
$B_i$ into the unit ball $\BB^{d-1}$ (i.e., $h_i$ expands linearly by $1/\rho$), and define the desired map by
$F|_{B_i}=S^{\frac{1}{A}} h_i^{-1}\circ f_i\circ h_i$ for $i=0,\dots, M-1$; extend $F$ by identity to the whole $\TT^{d-1}$.  
One easily estimates  
$$
\|F-\Id\|_{d-\eps}\leq \rho^{-(d-1-\eps)} \max_i\|f_i-\Id\|_{d-\eps}\leq (4A)^{(d-1-\eps)} M^{-1}=c_0 A^{-\eps},
$$ 
which is small for large $A$. 

In order to use the same idea for 
$ \BB^d$, one can embed the set $[0,1]\times \T^{d-1}$  into $\BB^d$ and do the same construction (extending  the balls to ${d}$-dimensional ones). Then on $\BB^d$ one gets a realization $\tilde F$ such that
$\|\tilde F-\Id\|_{d-\eps}$ is small, exactly as for the torus 
$\TT^{d-1}$.

\subsection*{3. Permutation map}
To increase the smoothness of the realization for $\BB^d$, we will find a longer sequence of $d$-dimensional balls $B_i$, $i=0,\dots, M-1$, of radius $\rho$,   with 
$$
\rho=\frac1{4A}, \quad M=qA^{d-1},
$$ 
where $q$ is of order $A^{\eps_0}$ for a certain 
$\eps_0=\eps_0(d)$, and a transformation $T$ mapping each $B_i$ into $B_{i+1}$ with $T^M(B_0)=B_0$ (the transformation $T$ plays the role of $S^{\frac{1}{A}}$ in the argument above).  
The increase of $M$ together with a relatively tame estimate for the norm of $T$ permits us to estimate the closeness of approximation in the $C^{d+\eps}$-norm.
We are able to ensure that
$T$ maps $B_i$ into $B_{i+1}$ isometrically not for all, but for {\it a large proportion} of the iterates $i=1,\dots M$, and this is enough for our purposes.

\begin{definition} [Funny periodic tower of balls] For $z \in \BB^d$,
  $\eta>0$ we denote by $B(z,\eta)$ the ball of radius $\eta$ around
  $z$. For $T \in {\rm Diff}^r_\mu(\BB^d)$, $N \in \N$,
  $\eta,\gamma>0$, we say that 
$B=B(z,\eta)$  is a base of an $(\eta,N,\gamma)$-funny periodic tower of discs for $T$ if 
\begin{itemize}
\item All $B_i:= T^i B$, $i=1,\ldots,N-1$, are disjoint;
\item $T^N B=B$, and  $T^N : B \to B$ is the Identity map,
\item at least $[\gamma N]$ integers $n_i \in [1,N]$ are isometry times for $T$ in the sense that $T^{n_i}$ is an isometry from $B$ to the disc $B_i:=T^{n_i}B$. 
\end{itemize}
\end{definition} 

The terminology {\it funny} periodic towers is borrowed from the notion of funny rank one introduced by J.-P. Thouvenot to weaken the notion of rank one systems (see \cite{Ferenczi}).

\begin{prop} [Permutation map]  \label{prop.tower} 
For any $d\geq 2$, $r\geq 1$, $A ,q\in \N$ such that $q^{r^4}  \ll A$,  
there exists $T \in
{\rm Diff}^\infty_\mu(\BB^d)$ such that 
\begin{itemize}
\item $\|T - {\rm Id}\|_r \leq \frac{q^{r^4} }{A}$,
\item $T$ has a $(\frac1{4A}, \frac12 q A^{d-1}, 1/2)$-funny periodic tower of discs.
\end{itemize}
\end{prop}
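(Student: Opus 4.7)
The plan is to construct $T$ as a composition $T = \Psi \circ T_0$, where $T_0$ is a ``multi-ring'' translation supported on $q$ parallel copies of the NRT configuration of Section~2, and $\Psi$ is a composition of $q$ pairwise disjointly supported ``jump'' diffeomorphisms that splice these $q$ cycles into a single long cycle.

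For the translation part, I would embed a slab $I\times\TT^{d-1}$ into the interior of $\BB^d$, far from $\partial\BB^d$, and for $k=0,\ldots,q-1$ place an NRT tower $\Sigma_k$ of $A^{d-1}/2$ disjoint balls of radius $1/(4A)$ at height $h_k=h_0+k/A$, obtained as the orbit of a base ball $B_0^{(k)}$ under the standard translation $S^{1/A}_\omega$, $\omega=(1,1/A,\ldots,1/A^{d-2})$, on the height-$h_k$ torus slice. The ball radius $1/(4A)$ together with height spacing $1/A$ keeps the rings mutually disjoint. The map $T_0$ acts as $(x,X)\mapsto (x,X+(1/A)\omega)$ on the slab (damped smoothly to identity near the slab boundary), so that $\|T_0-\Id\|_r\leq C_r/A$ for every $r\geq 1$, and on each $\Sigma_k$ it realizes an isometric NRT translation.

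For the jumps, I would fix for each $k$ a distinguished ``junction'' ball $B_*^{(k)}\in\Sigma_k$ and choose pairwise disjoint $d$-dimensional support balls $U_k\subset\BB^d$ of radius $L=c_d\,q^{-1/d}$, each containing both $T_0(B_*^{(k)})$ and the first ball $B_0^{(k+1)}$ of the next ring (indices mod $q$); the packing constraint is precisely $q L^d\leq c_d^d\,\mathrm{Vol}(\BB^d)$, which forces the scale $L\sim q^{-1/d}$. By a standard Dacorogna--Moser / bump-function construction I would build a measure-preserving diffeomorphism $\phi_k$ supported in $U_k$ (identity outside) that maps $T_0(B_*^{(k)})$ onto $B_0^{(k+1)}$ by a rigid translation of size $O(1/A)$. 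Bump-cutoff scaling then gives
\[
\|\phi_k-\Id\|_r \;\leq\; C_{r,d}\,\frac{1/A}{L^r}\;\leq\; C'_{r,d}\,\frac{q^{r/d}}{A},
\]
and because the $U_k$ are disjointly supported, $\Psi=\phi_0\circ\cdots\circ\phi_{q-1}$ obeys the same bound.

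Setting $T=\Psi\circ T_0$, away from the junctions $T$ acts as the isometric NRT translation within a single ring, while at each junction step $\phi_k$ redirects the orbit from $\Sigma_k$ into $\Sigma_{k+1}$; by construction $\phi_k$ is a rigid translation on its input sub-ball, so every iterate on the base ball is an isometry. With appropriate combinatorial choices (e.g.\ an even number of rings so the vertical displacements around the cycle cancel, or one compensating final jump), the orbit of $B=B_0^{(0)}$ closes up after exactly $N=qA^{d-1}/2$ iterations with $T^N|_B=\Id$, and \emph{all} $N$ iterates are isometries --- well beyond the required fraction $\gamma=1/2$. The Fa\`a di Bruno composition estimate yields
\[
\|T-\Id\|_r \;\leq\; C_r\bigl(\|T_0-\Id\|_r+\|\Psi-\Id\|_r\bigr)\;\leq\; C''_{r,d}\,\frac{q^{r/d}}{A}\;\leq\; \frac{q^{r^4}}{A},
\]
with generous slack since $r/d\leq r\leq r^4$ for $r\geq 1$, $d\geq 2$. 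The main technical obstacle is the quantitative construction of the jump maps $\phi_k$: one must produce volume-preserving diffeomorphisms that exactly transport one small ball isometrically onto another inside a support of radius $L$, with $C^r$ norm controlled by displacement$/L^r$. The key balance, forced by disjoint packing, is $L\sim q^{-1/d}$; combined with a displacement of order $1/A$, this matches the target bound under the hypothesis $q^{r^4}\ll A$, while all remaining steps reduce to standard composition estimates and combinatorial book-keeping.
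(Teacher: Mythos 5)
Your proposal has a genuine quantitative gap that is, in fact, exactly the obstruction the paper's global ``unfolding'' map $h_q$ is designed to sidestep.

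The problem is the interaction of the jump supports $U_k$ with the rest of the tower. You stack $q$ NRT rings at heights $h_0, h_0+1/A,\ldots,h_0+(q-1)/A$, so the entire tower sits inside a slab of thickness $q/A$. The $A^{d-1}/2$ balls of radius $1/(4A)$ in each ring cover a constant fraction of the torus slice at that height, so the full collection of $N\sim qA^{d-1}$ tower balls occupies a constant fraction of the slab. Now under your hypothesis $q^{r^4}\ll A$ one has $A\gg q^{1+1/d}$, hence the support radius $L\sim q^{-1/d}$ vastly exceeds the slab thickness $q/A$; each $U_k$ therefore cuts clear across all $q$ rings and meets on the order of $(LA)^{d}\sim q^{1/d}A^{d-1}$ tower balls, not just the two junction balls it is meant to act on. Summing over the $q$ jumps, essentially \emph{every} ball of the tower lies in the support of some $\phi_k$. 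Since $\phi_k$ is a nontrivial diffeomorphism there, $T=\Psi\circ T_0$ fails to be an isometry on those balls, and you cannot reach even the required fraction $\gamma=1/2$ of isometric times --- let alone ``all $N$ iterates are isometries'' as you claim. You cannot fix this by shrinking $L$: the junction balls are separated by $\sim 1/A$ (they must be, since they lie on adjacent rings), so $U_k$ must have radius at least $\sim 1/A$; taking $L\sim 1/A$ makes $\|\phi_k-\Id\|_r \sim (1/A)/L^r \sim A^{r-1}$, which blows up. There is no choice of $L$ in between that simultaneously (i) contains both junction balls, (ii) avoids the other tower balls, and (iii) gives a $C^r$ bound $\leq q^{r^4}/A$.

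The paper avoids this by never performing local jumps at all. It globally conjugates a single shear $\tT_\varphi$ on a ``longer'' fat torus $\tF_q$ back to the original domain via a folding diffeomorphism $h_q$ that re-parametrizes $q$ thin blocks into one long cylinder, with $\|h_q\|_r\leq q^{2r}$; the shear on $\tF_q$ then coherently advances the orbit through all $qA^{d-1}$ positions, and the $q^{r^4}/A$ bound comes from the Faà di Bruno cost of conjugating by $h_q$. Isometric times are the (constant-fraction) visits to the sub-blocks $\Delta_q$ on which $h_q$ is an isometry. If you want to salvage a ``jump'' picture, you essentially have to make the jump a globally consistent shear compatible with every ring at once --- which is what $\tT_\varphi$ conjugated by $h_q$ is --- rather than $q$ disjointly supported local bumps.
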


\begin{proof}

\noindent {\sc  Embedding  a ``fat torus" $\FF$ into $\BB^d$.}
For a fixed $d\geq 2$, 
denote by $\FF$  a ``fat torus": 
$$
\FF= [0,1]\times \T^{d-1},
$$ 
where $\T^{d-1}=\R^{d-1}/\Z^{d-1}$. We will denote the natural Haar-Lebesgue measure on $\FF$  by 
$\mu$, and the coordinates in
$\FF$ by $(x,y,z)\in [0,1]\times \T \times \T^{d-2}$. 
(In two dimensions
a ``fat torus'' $\FF^2$ is an annulus with coordinates $(x,y)$.)

Given $q>1$, consider an open set $P \in \BB^d$ such that 
\begin{itemize}
\item $P$ contains a cube $Q_q:=(0,1-\frac1{q})^{d}$;
\item  $P$ is $C^{\infty}$-diffeomorphic to $\FF$;
\item $\mu(P) =1$.
\end{itemize}
Let   $h: P \to \FF$ be a  $C^\infty$ diffeomorphism, transforming the Lebesgue measure on 
$\BB^{d}$ to that on $\FF$,
and such that $h$ is an isometry on the cube $Q_q$.
Such a volume-preserving map exists by \cite{M} or \cite{AK}.

\medskip

\noindent {\sc Partition of $\FF$.}
Let 
$$
\begin{aligned}
R_q&=[0,1] \times \left [0,\frac{1}{q} \right] \times \T^{d-2} \subset \FF,\\
\Delta_q &= \left[ 0.1\, ,0.9 \right] \times 
\left[\frac{1}{10q}\, , \frac{9}{10q} \right] \times\T^{d-2} \subset R_q.
\end{aligned}
$$ 
Extend this
construction $\frac{1}{q}$-periodically in $y$ to the whole $\FF$ to obtain a partition of $\FF$ into thin rectangular blocks.

\medskip
 
\noindent {\sc Switching to a "longer" fat torus $ \tF_q$.}
Consider another  fat torus 
$$
\tF_q = \left[0,\frac{1}{q} \right] \times  (\R/(q\Z))  \times  \T^{d-2}
$$ 
with coordinates $(\tx,\ty ,\tz)$ corresponding to the above splitting. Let 
$$
\begin{aligned}
\tR_q&= \left[0,\frac{1}{q} \right] \times [0,1] \times \T^{d-2}
\subset \tF_q,  \\
\tD_q&= \left[\frac{1}{10q}\, , \frac{9}{10q} \right] \times
\left[ 0.1\, ,0.9 \right] \times \T^{d-2} \subset \tR_q .
\end{aligned}
$$ 
Notice that $\tD_q$  can be mapped into  $\Delta_q$ by an isometry.
Extend this construction 1-periodically in $\ty$ to the whole $\tF_q$.

We define the circle actions on $\FF$ and $\tF_q$, respectively:
$$
\begin{aligned}
S_t (x ,y, z_1\dots z_{d-2}) &= (x ,y, z_1\dots z_{d-2})+
t\left(0,1, \frac{1}{A}, \dots , \frac{1}{A^{d-2}}\right),\\
\tS_t (\tx,\ty ,\tz_1\dots \tz_{d-2})&=
(\tx,\ty ,\tz_1\dots \tz_{d-2}) +t \left(0,1, \frac{1}{qA}, \dots , 
\frac{1}{qA^{d-2}}\right).
\end{aligned}
$$ 
We now define $h_q$ to be a volume-preserving map from $\FF$ to $\tF_q$ such that 
\begin{itemize}
\item $h_q$ maps the set $\FF|_{y=0}$ to $\tF_q|_{\ty=0}$;
\item $h_q$ acts as identity on the last $d-2$ components; 
\item $h_q(R_q)=\tR_q$, \ $h_q \circ S_{\frac{1}{q}} = \tS_1 \circ h_q$;
\item $h_q(\Delta_q)=\tD_q$, and $h_q$ acts as an isometry in restriction to  $\Delta_q$;
\item $\|h_q\|_r \leq q^{2r}$.
\end{itemize}

\medskip
 
\noindent {\sc Definition of the desired map $T$.}

Let $\varphi : (0,q^{-1})\to \R$ of class $C^r$ be such that
$$
\varphi(\tx)=
\begin{cases}
1/A, & \text{ for } \tx\in [0.3q^{-1},0.7q^{-1}] \\ 
0 & \text{ for } \tx \in (0,0.2q^{-1}] \cup [0.8q^{-1}, q^{-1}).
\end{cases}
$$ 
It is easy to construct such a function satisfying $\| \varphi \|_r\leq q^{2r}/A$. 

Define on $\tF_q$ the shear map
$$\tT_\varphi (\tx,\ty ,\tz_1\dots \tz_{d-2})=
(\tx,\ty ,\tz_1\dots \tz_{d-2}) +\varphi(\tx)\cdot \left(0,1, \frac{1}{qA}, \dots , \frac{1}{qA^{d-2}}\right).
$$
Finally, let $T:P \mapsto P$ be defined by
$$
T=h^{-1}\circ h_q^{-1} \circ \tT_{\varphi} \circ h_q \circ h.
$$
The fact that $\tT_{\varphi}$ equals identity close to the boundary of $\tF_q$ implies that
$T$ equals identity close to the boundary of $P$, and can thus be
extended by identity to the whole $\BB^{d}$.

The obtained transformation  $T:\BB^{d} \mapsto \BB^{d}$ satisfies the conclusion of Proposition \ref{prop.tower}. 
Indeed, we have that 
$$
\|T-{\rm Id}\|_r\leq \left( \|h_q\|_{r+1} \|h_q^{-1}\|_{r+1} \|h\|_{r+1}\|h^{-1}\|_{r+1}\right)^{r+1} \|\tT_\varphi-{\rm
  Id}\|_{r+1} \leq \frac{q^{r^4}}{A}.
$$

As for the funny periodic tower, let $\tB$ be the disc of radius $\rho=\frac1{4A}$ centred at $(\tx,\ty,\tz)=(\frac1{2q},0,0)$. Take for the base of the tower of $T$ the ball
$B:=h^{-1}\circ h_q^{-1}(\tB)$. Under $\tT_\varphi$, the center of $\tB$ is translated
by $1/A$ along the trajectory of the shift $\tS$ passing through
this point. It is easy to see that the tubular neighborhood of radius $1/(3A)$ of this trajectory
does not intersect itself, and $\tS^{qA^{d-1}}=\text{\,
  Id}$. Moreover, $\tT_\varphi$ is an isometry on this neighborhood. 
  Therefore,  $\tB$ is the base of a tower by
discs of height $qA^{d-1}$ for $\tT_\varphi$ on $\tF$. Moreover the times $n_i
\in [0, A^{d-1}q-1]$ such that $\tT_\varphi^{n_i}(\tF ) \in \cup_{\ell=0}^{q-1}
\tS_\ell \tD_q$ represent a proportion of around $8/10$ of $A^{d-1}q$,
hence clearly more than $1/2$ of $n \in [0,A^{d-1}q]$ are isometric times
for $T$. For such times $T^{n_i}|_{B}=h^{-1}\circ h_q^{-1} \circ \tT_\varphi^{n_i}
\circ h_q\circ h$ is a composition of several isometries and is thus an
isometry.

\end{proof}

\subsection*{4. Proof of Theorem \ref{theo.main}}

\begin{proof}[Proof of Theorem \ref{theo.main}]
Given $d\geq 2$, let $\eps_0=\frac{1}{(d+1)^{4}}$. Fix any $0<\eps<\eps_0$, let $r:=d+\eps$, and
assume that $f \in {\rm Diff}^r_\mu(\BB)$. 

Choose a large $q\in \NN$ and $A>q^{r^4} $, let $M=\frac12qA^{d-1}$ and 
apply Proposition \ref{prop.fragmentation} to get
$f_0,\ldots, f_{M-1} \in \diff$ such that 
$\|f_i-{\rm Id}\|_r\leq C(d, r,f) M^{-1}$ and  $f=f_0 \circ \ldots \circ f_{M-1}$.

Let $T$ be as in Proposition \ref{prop.tower} with $d$, $r$, $A$ and $q$ as above. 
We let $B_i=T^{n_i} B$, where $B$ is the base of the $(1/(4A), 2M , 1/2)$-funny periodic tower of discs, 
and  $0=n_0 ,\ldots,n_{M}=2M$  are isometric times for $T$. 
By Proposition \ref{prop.tower}, $T^{n_{M}}=T^{2M}$ is the Identity from
$B$ to $B$, and 
\begin{equation} \label{eq.T} 
\|T-{\rm Id}\|_r \leq  \frac{q^{r^{4}}}{A} .
\end{equation}  
Let $h_{i} :
B_i \to \BB^{d}$ 
be similarities that send $B_i$ onto $\BB^{d}$ such that for $i=0,\ldots,M-1$ we have
\begin{equation} \label{eq.hii}
h_{i} = h_0 \circ T^{-n_{i}}, 
\end{equation}
or equivalently 
\begin{equation} \label{eq.hi}
h_{i+1} = h_i \circ T^{-(n_{i+1}-n_i)}. 
\end{equation}
Observe that since $T^{n_{M}} : B_0 \to B_0$ is the Identity map, then it follows that 
\begin{equation} \label{eq.hN} 
h_{n_M} =h_{0} \circ T^{-n_{M}}=h_0. 
\end{equation}

We define $\bar{F} \in \diff$ such that  
\begin{itemize} 
\item $\bar{F}|_{B_i}= h_i^{-1} \circ f_i \circ h_i$, $\forall
  i=0,\ldots,M-1$;

\item $\bar{F}$ equals Identity on the complementary of $B_0 \sqcup \ldots \sqcup B_{M-1}$. 

\end{itemize} 

Observe that since $h_i$ are similarities that expand by $1/\rho=4A$, we have that 
\begin{equation} \label{eq.norm} 
\begin{aligned}
\|\bar{F} - {\rm Id}\|_{r} \leq & c_0 A^{r-1} \max_i \|f_i - {\rm
  Id}\|_{r} \leq  A^{r-1}C(r,d,f) M^{-1} = \\
  &C(r,d,f) q^{-1}A^{d-r} 
\end{aligned}
\end{equation}

Define finally ${F} \in \diff$ by
$$
F=T \circ \bar{F}.
$$ 
Since 
$$
F-{\rm Id}=(\bar{F} - {\rm Id} ) + (T - {\rm Id} )\circ \bar{F} ,
$$
estimates  \eqref{eq.T} and \eqref{eq.norm} 
imply that for a constant $C _1(d,r) $ we have
$$
\|F-{\rm Id}\|_r  \leq 
\|\bar{F} - {\rm Id}\|_{r}  +  \| T - {\rm Id}\|_{r} \leq 
C(r,d,f) q^{-1} A^{r-d}   + C _1(d,r) \frac{q^{r^{4}}}{A}  .
$$
Given $\de>0$, we need to identify those $\eps$ for which the 
latter sum can be made smaller than $\de$ by a choice of $A$ and $q$.
Notice that $q$ enters in the latter two terms both in the numerator
and in the denominator.
Let $c_1$, $c_2$ be positive constants such that $c_1+c_2=1$. 
We need 
$$
C(r,d,f) q^{-1} A^{\eps }\leq c_1\de,\quad  C _1(d,r) \frac{q^{r^{4}}}{A} \leq c_2\de,
$$
which gives 
\begin{equation}\label{est_q}
C_2(r,d,f) A^{\eps } (c_1\de)^{-1}  < q < (c_2 \de A )^{1/r^{4}}.
\end{equation}
Such a choice of $q$ is possible if 
$$
\eps < \frac{1}{r^{4}} =\frac{1}{(d+\eps)^{4}}:=\eps_0
$$
and $A$ is sufficiently large. Let us summarise the above argument.  Assume that $\eps $ satisfies the above inequality, let 
$r=d+\eps$. Given $\delta$, choose a sufficiently large $A$ and let $q$ satisfy \eqref{est_q}. Then we have $\| F - {\rm Id}\|_{d+\eps} <\delta$, as desired.
 
At the same time, $F^{n_{M}}(B_0)=B_0=B$ and by \eqref{eq.hi} and \eqref{eq.hN} we have 
$$
F^{n_{M}}|_{B}= h_0^{-1} \circ f \circ h_0,
$$  
which completes the proof of Theorem \ref{theo.main}. 
\end{proof}


\begin{thebibliography}{EFK1}




\bibitem[AK]{AK}   D. ~V. Anosov and A. ~B. Katok,
\newblock {\it New examples in smooth ergodic theory. Ergodic diffeomorphisms},
\newblock {Transactions of the Moscow Mathematical Society} 23, 1--35, 1970.


\bibitem[BT]{BT} P. Berger and D. Turaev,
\newblock {\it On Herman's positive entropy conjecture,}
\newblock {Advances in Mathematics, Vol: 349 (2019), pp. 1234-1288.}

\bibitem[F]{Ferenczi} S. Ferenczi,
\newblock 
{\it Syst\`emes de rang un gauche.} 
\newblock Ann. Inst. H. Poincar\'e Probab. Statist. 21 (1985), no. 2,  177--186.

\bibitem[H]{herman-ICM} M. Herman,
\newblock 
Some open problems in dynamical systems, Proceedings of the International Congress of Mathematicians, Vol. II (Berlin, 1998),
\newblock {Doc. Math. 1998 Extra Vol. II}, 797--808, 1998.

\bibitem[M]{M} J. Moser,
\newblock {\it On the Volume Elements on a Manifold,}
\newblock {Transactions of the American Mathematical Society}
\newblock {Vol. 120, No. 2 (Nov., 1965), pp. 286-294.}

\bibitem[NRT]{NRT} S.~Newhouse, D.~Ruelle, F.~Takens,
\newblock {\it  Occurrence of strange Axiom A attractors near quasiperiodic flows on $\T^m$, $m\geq 3$.  }
\newblock { Comm. Math. Phys. 64 (1978/79) 35-40.}

\bibitem[RT]{RT} D.~Ruelle and F.~Takens,
\newblock {\it On the nature of turbulence,}
\newblock {Comm. Math. Phys., 20:167-192.}



\bibitem[T]{T}  D.~Turaev,
\newblock {\it Maps close to identity and universal maps in the Newhouse domain}.
\newblock {Comm.
Math. Phys., 335(3):1235-1277, 2015.}
\end{thebibliography}
\end{document}